\newtheorem{theorem}{Theorem}[section]
\newtheorem{proposition}[theorem]{Proposition}
\newtheorem{corollary}[theorem]{Corollary}
\newtheorem{lemma}[theorem]{Lemma}
\newtheorem{conj}[theorem]{Conjecture}
\theoremstyle{definition}
\newtheorem{example}[theorem]{Example}
\theoremstyle{remark}
\newtheorem{remark}[theorem]{Remark}
\numberwithin{equation}{section}
\newcommand{\QSym}{\ensuremath{\mathit{QSym}}}
\newcommand{\A}{\mathcal{A}}
\newcommand{\shuf}{{\,\sqcup\!\sqcup\,}}
\title[Non-commutative Combinatorial Inverse Systems]
{\bf  Non-commutative Combinatorial Inverse Systems}
\author{J.-C.~Aval}\address[Jean-Christophe Aval]
{Labri\\ Universit\'e Bordeaux 1\\ 351 cours de la
Lib\'eration\\ 33405 Talence cedex\\ FRANCE}
\email{aval@labri.fr}
\urladdr{http://www.labri.fr/perso/aval}
\author{N. Bergeron}\address[Nantel Bergeron]
{Department of Mathematics and Statistics\\ York  University\\ To\-ron\-to, Ontario M3J 1P3\\ CANADA}
\email{bergeron@mathstat.yorku.ca}
\urladdr{http://www.math.yorku.ca/bergeron}
 \author{H. Li}\address[Huilan Li]
 {Department of Mathematics\\ Drexel  University\\ Philadelphia, PA 19104\\ U.S.A}
 \email{huilan.li@gmail.com}
 \urladdr{http://www.math.drexel.edu/\~{}huilan/}
\date{\today}
\thanks{J.-C. Aval is supported in part by the ANR project MARS (BLAN06-2$\_$0193)}
\thanks{N. Bergeron is supported in part by CRC and NSERC}
\thanks{H. Li is supported in part by CRC, NSERC and NSF grant DMS-0652641}
\keywords{}
\subjclass[2000]{}
\begin{document}
\maketitle

\begin{abstract}
We introduce the notion of a combinatorial inverse system in non-commutative variables.
We present two important examples, some conjectures and results.
These conjectures and results were suggested and supported by computer investigations.
\end{abstract}

\section{Introduction}\setcounter{equation}{0}

Inverse systems \cite{IS} and Gr\"obner bases \cite{G} are very useful tools to study finitely generated commutative algebras. 
In practice we are given a presentation of an algebra with generators and relations. That is, the algebra is the quotient 
of a free commutative algebra (a polynomial ring in finitely many variables) by the ideal of relations. Inverse systems and Gr\"obner bases 
allow one to give explicit linear bases and to extract all the properties of the quotient.

We are interested in the case where the  ideals of relations are obtained from a family of algebras related to  a combinatorial construct. More precisely, a \textit{combinatorial inverse system} is a family of inverse systems obtained from a family of ideals $\{I_n\}$  where $I_n$ is generated by a combinatorial Hopf algebra \cite{ABS} restricted to $n$ variables. There are several important examples of combinatorial inverse systems (see \cite{ABB, AB,  GP,Haim}). Coinvariants of the symmetric groups are particularly well studied \cite{Ch,St} and the spaces of diagonal harmonics \cite{Haim} are still intensively studied.

For finitely generated non-commutative algebras, much less is known. In contrast with the commutative case, the ideal of relations is not guaranteed to be finitely generated. This may cause many problems. In particular, the problem of finding a Gr\"obner basis \cite{G} is not decidable. A Gr\"obner basis in non-commutative variables is in general infinite and the Buchberger algorithm may not stop. In the non-commutative setting, even the inverse system associated to the symmetric group invariants is not well understood. 

In this paper we introduce the basic notion of  a non-commutative inverse system in Section~\ref{sec:NCIS}. We only consider homogeneous ideals since combinatorial inverse systems are generated by homogenous elements. In Section~\ref{sec:CIS}, we define more precisely the notion of a combinatorial inverse system and give some examples in commutative variables.  We then return our attention to non-commutative combinatorial inverse systems. Section~\ref{sec:SymIS} is dedicated to the non-commutative combinatorial inverse system for the symmetric polynomials in $n$ non-commutative variables (symmetric group invariants). It is not known whether this system is finite for all $n$. We have computed this system for $n=1,2,3$ and $4$ and conjecture it is finite for all $n$. 
This conjecture would guarantee that the non-commutative Buchberger algorithm stops and that the ideals generated by non-commutative symmetric polynomials  always have decidable finite Gr\"obner bases.
We give some evidence of this and some weaker conjectures. In Section~\ref{sec:QSymIS} we present the non-commutative combinatorial inverse system of Quasi-symmetric functions (Temperley-Lieb algebras invariants). In recent work we have shown that this inverse system is finite, which gives us here further evidence for our conjectures in Section~\ref{sec:SymIS}.


\section{Elements of Non-commutative Inverse Systems}\label{sec:NCIS}

\subsection{Non-commutative inverse systems}

Let $R$ be the ring of polynomials in non-commuting variables  $\{x_1,x_2,\ldots, x_n\}$. That is
$$R=\mathbb{C}\langle x_1,x_2,\ldots, x_n\rangle.$$ For $a\in \{x_1,x_2,\ldots, x_n\}$, we define on $R$
the operator ${d}_a$ by
$${d}_a\cdot w=\left\{\begin{array}{ll}
                     u &\hbox{if }w=au,\\
                     0 &\hbox{otherwise,}
                    \end{array}
             \right.$$
where $w$ and $u$ are monomials. We refer to monomials as words and to its variables as letters. We think of this operator as a derivative although it does not
satisfy Leibniz's Rule. Let $u=u_1u_2\cdots u_k$ be a word of degree $k$ with
$u_j=x_{i_j}$ for some $1\leq i_j\leq n$. We denote 
$$\overleftarrow{u}=u_{k}u_{k-1}\cdots u_1$$ and 
$${d}_u\cdot w=(d_{u_1}\cdot (d_{u_2}\cdot (\cdots (d_{u_k}\cdot w)\cdots))).$$
A \textit{pairing} $<,>$ on $R$ is defined on the monomials (or words)
by
$$<u,v>=\delta_{u,v}=[{d}_{\overleftarrow{u}}\cdot v]_{c.t.},$$
where $[f]_{c.t.}$ means the constant term of $f$ and
$$\delta_{u,v}=\left\{\begin{array}{ll}
                     1 &\hbox{if }u=v,\\
                     0 &\hbox{otherwise.}
                    \end{array}
             \right.$$
For each $f\in R$, we define
$$f(d)=\sum_{w}c_wd_w,$$
when $f=\sum_wc_ww$, where $c_w\in\mathbb{C}$. Furthermore,
$$\overleftarrow{f}(d)=\sum_{w}c_wd_{\overleftarrow{w}} \mbox{ and }<f,P>=\big[\overleftarrow{f}(d)\cdot P\big]_{c.t.}.$$

Each $f\in R$ can be written as $f=f_n+f_{n-1}+\cdots+f_0$, where $f_i$ is the component in $f$ of degree $i$ for all $0\leq i\leq n$. We say that $I\subseteq R$ is a \textit{homogeneous ideal} if
$f=f_n+f_{n-1}+\cdots+f_0\in I$ implies $f_i\in I$ for all $0\leq
i\leq n$. In
fact, $I$ is a homogeneous ideal if and only if $I$ is generated by homogeneous elements of $R$.
For any homogeneous ideal $I$ we define
$$I^{\perp}=\{P\in R:\ <f,P>=0,\ \forall f\in I\}.$$ If $I$ is
homogeneous, clearly $R=I\oplus I^{\perp}.$ In this case, we can compute $ I^{\perp}$ independently in each (finite dimensional) homogeneous component of $R$.

\begin{lemma}\label{lem:eq} Let $I$ be a homogeneous ideal. Then
$$I^{\perp}=\{P\in R:\ \overleftarrow{f}(d)\cdot P=0,\ \forall f\in I\}.$$
This means that $I^\perp$ is the solution to a system of (differential)
equations.
\end{lemma}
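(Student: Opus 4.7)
The plan is to prove both inclusions. One direction is immediate: if $\overleftarrow{f}(d)\cdot P=0$, then in particular its constant term vanishes, so $\langle f,P\rangle=0$; this shows that the set on the right is contained in $I^\perp$.

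For the reverse inclusion, suppose $P\in I^\perp$ and $f\in I$. I want to show that the polynomial $Q:=\overleftarrow{f}(d)\cdot P$ is identically zero, not merely that its constant term vanishes. The key idea is to exploit the fact that $I$ is an ideal, so $fu\in I$ for every word $u$, and then use suitable left-multiplication by $fu$ to ``read off'' each coefficient of $Q$.

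Concretely, the first step is to establish the composition law
\[
d_{\overleftarrow{w_1 w_2}} \;=\; d_{\overleftarrow{w_2}}\circ d_{\overleftarrow{w_1}}
\]
for any two words $w_1,w_2$. This follows directly from the definition of $d_w$ together with the identity $\overleftarrow{w_1 w_2}=\overleftarrow{w_2}\,\overleftarrow{w_1}$, by unwinding the nested application $d_{a_1}(d_{a_2}(\cdots))$. Extending $\mathbb{C}$-linearly and specializing to $w_1=w$, $w_2=u$ in the expansion $f=\sum_w c_w w$, one obtains
\[
\overleftarrow{fu}(d) \;=\; d_{\overleftarrow{u}}\circ \overleftarrow{f}(d),
\]
so that $\overleftarrow{fu}(d)\cdot P = d_{\overleftarrow{u}}\cdot Q$.

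The second step is the observation that $[d_{\overleftarrow{u}}\cdot Q]_{c.t.}$ equals the coefficient of the word $u$ in $Q$; this is immediate from $[d_{\overleftarrow{u}}\cdot v]_{c.t.}=\delta_{u,v}$, which is just the definition of the pairing unpacked. Combining the two steps, for every word $u$,
\[
\langle fu,P\rangle \;=\; [\overleftarrow{fu}(d)\cdot P]_{c.t.} \;=\; [d_{\overleftarrow{u}}\cdot Q]_{c.t.} \;=\; [u]\,Q,
\]
where $[u]\,Q$ denotes the coefficient of $u$ in $Q$. Since $fu\in I$ and $P\in I^\perp$, this coefficient is $0$ for every $u$, so $Q=\overleftarrow{f}(d)\cdot P=0$, proving the reverse inclusion. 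The only nontrivial point, and the place to be careful, is the composition identity for the $d$-operators with reversed words; once that is set up correctly, the rest is essentially bookkeeping.
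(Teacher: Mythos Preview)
Your proof is correct and follows essentially the same approach as the paper's: both directions are handled identically, using that $fu\in I$ to conclude $[d_{\overleftarrow{u}}\overleftarrow{f}(d)\cdot P]_{c.t.}=0$ for every word $u$, whence $\overleftarrow{f}(d)\cdot P=0$. The only difference is that you spell out the composition identity $\overleftarrow{fu}(d)=d_{\overleftarrow{u}}\circ\overleftarrow{f}(d)$ explicitly, which the paper leaves implicit.
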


\begin{proof}
Let $P\in R$ such that $\overleftarrow{f}(d)\cdot P=0$ for all $f\in
I$. Then $\big[\overleftarrow{f}(d)\cdot P\big]_{c.t.}=0$. So
$<f,P>=0$ for all $f\in I$. Hence $P\in I^{\perp}$. Conversely, let
$P\in I^{\perp}$ and $f\in I$. Then $fu\in I$ for any word $u$. So
$\big[{d}_{\overleftarrow{u}}\overleftarrow{f}(d)\cdot
P\big]_{c.t.}=0$ for all $u$. This implies that
$\overleftarrow{f}(d)\cdot P=0$.
\end{proof}

\begin{lemma}\label{lem:cone}
Let $I$ be a homogeneous  ideal. Then $I^{\perp}$ is closed
under derivation, i.e., ${d}_{u}\cdot P\in I^{\perp}$ for all $P\in
I^{\perp}$ and $u$.
\end{lemma}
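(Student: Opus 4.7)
The plan is to verify the derivation-closure condition directly, invoking the characterization of $I^\perp$ given in Lemma~\ref{lem:eq}. Fix $P \in I^\perp$, a word $u$, and an arbitrary $f \in I$. By Lemma~\ref{lem:eq} it suffices to show $\overleftarrow{f}(d)\cdot(d_u\cdot P) = 0$, since this is the condition that puts $d_u\cdot P$ in $I^\perp$.

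The first step is a purely combinatorial identity on the operators $d_w$. Unpacking the definition of $d_w$ as a composition of single-letter derivations that strips the prefix $\overleftarrow{w}$ from a monomial, one checks the operator identity $d_v\circ d_u = d_{vu}$ for all words $u,v$: both sides strip the prefix $\overleftarrow{u}\,\overleftarrow{v}=\overleftarrow{vu}$. Extending linearly to all of $R$, and using $\overleftarrow{\overleftarrow{u}\,w} = \overleftarrow{w}\,u$, one obtains for $f=\sum_{w}c_w w$ the compatibility
$$\overleftarrow{f}(d)\circ d_u \;=\; \sum_{w} c_w\, d_{\overleftarrow{w}\,u} \;=\; \overleftarrow{(\overleftarrow{u}\,f)}(d).$$

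The second step invokes the two-sided ideal structure of $I$. Since $f\in I$, we have $\overleftarrow{u}\,f\in I$, and applying Lemma~\ref{lem:eq} to this element gives
$$\overleftarrow{f}(d)\cdot(d_u\cdot P) \;=\; \overleftarrow{(\overleftarrow{u}\,f)}(d)\cdot P \;=\; 0.$$
Since $f\in I$ was arbitrary, a final appeal to Lemma~\ref{lem:eq}, now in the opposite direction and applied to $d_u\cdot P$, yields $d_u\cdot P\in I^\perp$.

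The main (and essentially only) obstacle I foresee is the operator identity $d_v\circ d_u = d_{vu}$: once this composition rule is set up carefully, the lemma is a one-line consequence of Lemma~\ref{lem:eq}. The identity itself is pure bookkeeping, but it must be stated with some care because of the reversal convention $w\mapsto\overleftarrow{w}$ built into the definitions of $d_u$ and of the pairing.
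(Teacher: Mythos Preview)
Your proof is correct and follows essentially the same route as the paper's: both reduce the claim to the fact that $\overleftarrow{u}\,f\in I$ whenever $f\in I$, via the operator identity $\overleftarrow{f}(d)\circ d_u=\overleftarrow{(\overleftarrow{u}\,f)}(d)$. The only cosmetic difference is that the paper works with the pairing (constant terms) directly, whereas you invoke Lemma~\ref{lem:eq} to use the full vanishing $\overleftarrow{g}(d)\cdot P=0$; the underlying computation is identical.
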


\begin{proof}
Let $P\in I^{\perp}$. Then $<f,P>=0$ for all $f\in I$. For any
$f\in I$,
\begin{eqnarray*}
<f, {d}_u\cdot P>&=&\big[\overleftarrow{f}(d){d}_u\cdot
P\big]_{c.t.}\\
&=&<\overleftarrow{u}f,P>\\
&=&0, \hspace{2cm}\hbox {since }\overleftarrow{u}f\in I.
\end{eqnarray*}
Hence ${d}_u\cdot P \in I^{\perp}$.
\end{proof}

Since $R=I\oplus I^{\perp}$, $R/I\cong I^\perp$. Thanks to Lemma~\ref{lem:eq} and Lemma~\ref{lem:cone}, the space $I^\perp$ is the solution to a system of (differential) equations that is closed under differentiation. This is what we will refer to as the {\sl noncommutative inverse system} for the homogeneous ideal $I$. 

\begin{remark}
In some cases it may be interesting to study left (or right) ideals only. This still has some rich structure (see \cite{BRRZ,BRZ}), but the quotient is very likely to be infinite and has a different algebraic meaning. 
\end{remark}

\subsection{Relationship between commutative and non-commutative}

We are now interested in the relationship between polynomials in
commuting and non-commuting variables. Let $u=u_1u_2\cdots u_k$ be
a word of length $k$ with $u_j=x_{i_j}$ for some $1\leq i_j\leq
n$. Let $\sigma\in S_n$ and $\pi\in S_k$. We define
$$\sigma\circ u=x_{\sigma(i_1)}x_{\sigma(i_2)}\cdots x_{\sigma(i_k)}$$
and
$$u\circ \pi=u_{\pi(1)}u_{\pi(2)}\cdots u_{\pi(k)}.$$
For any nonnegative integral vector
$\alpha=(\alpha_1,\alpha_2,\ldots,\alpha_n)$, we define
$$x^{\alpha}=x_1^{\alpha_1}x_2^{\alpha_2}\cdots x_n^{\alpha_n}$$ and
$$\alpha!=\alpha_1!\alpha_2!\cdots \alpha_n!.$$
Consider the maps
$$\begin{array}{rcl}
\chi:\mathbb{C}\langle x_1,x_2,\ldots, x_n\rangle&
\longrightarrow& \mathbb{C}[x_1,x_2,\ldots, x_n]\\
x_i&\mapsto& x_i
\end{array}$$ and
$$\psi:\mathbb{C}[x_1,x_2,\ldots, x_n]\ \longrightarrow\ \mathbb{C}\langle x_1,x_2,\ldots, x_n\rangle$$
defined by
$$\psi(x^\alpha)=\sum_{\pi\in S_{k}}u\circ \pi,$$
where $k=\alpha_1+\alpha_2+\cdots\alpha_n$ and $u$ is any word such that $\chi(u)=x^\alpha$. 
This is well defined since the letters of $u$ in the definition of $\psi$ are permuted in all possible ways, hence it does not depend on the choice of $u$.
We know that $\chi$ is an algebra homomorphism. On
the other hand,  $\psi$ is   a linear injection which does not preserve products.

We define
$\partial_{x^{\alpha}}=\partial_1^{\alpha_1}\partial_2^{\alpha_2}\cdots
\partial_n^{\alpha_n},$ where $\partial_i$ is the partial
derivative operator over $x_i$, i.e., normally
$\frac{\partial}{\partial x_i}$, and the product of these
operators means the composition of them. For any $Q\in \chi(R)=\mathbb{C}[x_1,x_2,\ldots, x_n]$,
we define $Q(\partial)$ by replacing $x_i$ in $Q$ with $\partial_i$. 
For $P,Q\in \mathbb{C}[x_1,x_2,\ldots, x_n]$, the standard pairing is $<Q,P> =[Q(\partial)\cdot P]_{c.t}$. 
For an ideal $J\subseteq \chi(R)=\mathbb{C}[x_1,x_2,\ldots, x_n]$,
we define 
 $$\begin{array}{rl}
   J^\perp&=\ \{P\in  \chi(R):\ <Q,P>=0,\ \forall Q\in J\} \cr
 &=\  \{P\in  \chi(R):\ {Q}(\partial)\cdot P=0,\ \forall Q\in J\}.
 \end{array}$$
 This is the usual (commutative) inverse system for a (commutative) ideal.

\begin{lemma}\label{l3}
Let $P\in \chi(R)$ and $f \in R$. Then
$$\big[\overleftarrow{f}(d)\cdot \psi(P)\big]_{c.t.}=\big[\chi(f)(\partial)\cdot P\big]_{c.t.}.$$
\end{lemma}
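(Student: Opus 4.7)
My plan is to reduce both sides to the same explicit numerical quantity via bilinearity, then compute term by term.

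By bilinearity of $f \mapsto \overleftarrow{f}(d)$, of $Q \mapsto Q(\partial)$, and of $\psi$, it suffices to verify the identity when $f = w$ is a single word of some multidegree and $P = x^\alpha$ is a single monomial. I would then attack the two sides independently.

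For the left-hand side, observe that $\overleftarrow{w}(d) = d_{\overleftarrow{w}}$. Unwinding the definition of $d_u$ given in the paper, one sees that $d_{\overleftarrow{w}} \cdot v$ equals the suffix $v'$ whenever $v = w v'$ and $0$ otherwise, so taking the constant term extracts precisely the coefficient of the word $w$ in the polynomial being acted on. Hence $[\overleftarrow{w}(d) \cdot \psi(x^\alpha)]_{c.t.}$ equals the coefficient of $w$ in $\psi(x^\alpha)$. Next, I would unpack $\psi(x^\alpha) = \sum_{\pi \in S_k} u \circ \pi$, where $u$ is any chosen word with $\chi(u) = x^\alpha$ and $k = |\alpha|$. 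As $\pi$ ranges over $S_k$, the word $u \circ \pi$ runs over all rearrangements of $u$, and each distinct rearrangement is produced by exactly $\alpha_1! \cdots \alpha_n! = \alpha!$ permutations (those that merely permute positions holding identical letters). Consequently the coefficient of $w$ in $\psi(x^\alpha)$ is $\alpha!$ if $\chi(w) = x^\alpha$ and $0$ otherwise.

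For the right-hand side, write $\chi(w) = x^\beta$, so $\chi(w)(\partial) = \partial_1^{\beta_1} \cdots \partial_n^{\beta_n}$. A standard commutative computation gives $[\partial^\beta \cdot x^\alpha]_{c.t.} = \alpha!$ when $\beta = \alpha$ and $0$ otherwise. Both sides therefore equal $\alpha!$ exactly when $\chi(w) = x^\alpha$ and vanish in all other cases, establishing the claim.

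The only potentially subtle step is the multiplicity bookkeeping in the definition of $\psi$, since repeated letters in $u$ cause different $\pi \in S_k$ to produce the same word $u \circ \pi$; once the factor $\alpha!$ is correctly identified there, the identity reduces to matching two copies of $\alpha!$.
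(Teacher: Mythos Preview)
Your proposal is correct and follows essentially the same route as the paper: reduce by bilinearity to a single word $f=w$ and a single monomial $P=x^{\alpha}$, then identify both sides with the count $\#\{\pi\in S_k:\ w=u\circ\pi\}=\alpha!\,\delta_{\chi(w),x^{\alpha}}$. The paper simply phrases the left-hand computation directly as this cardinality, whereas you phrase it as ``the coefficient of $w$ in $\psi(x^{\alpha})$''; these are the same argument.
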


\begin{proof}Let $u$ and $v$ be two words such that $\chi(u)=x^{\alpha}$ and $\chi(v)=x^{\beta}$. First, we show that
$$\big[{d}_{\overleftarrow{v}}\cdot \psi(u)\big]_{c.t.}=\alpha ! \delta_{\chi(u),\chi(v)}
=\big[\partial_{\chi(v)}\cdot u\big]_{c.t.}.$$
\begin{eqnarray*}
\big[{d}_{\overleftarrow{v}}\cdot \psi(u)\big]_{c.t.}&=&
\big[{d}_{\overleftarrow{v}}\cdot \sum_{\pi\in S_k}u\circ \pi\big]_{c.t.}\\
&=&\sum_{\pi\in S_k}\big[{d}_{\overleftarrow{v}}\cdot
(u\circ\pi)\big]_{c.t.}\\
&=&\sum_{\pi\in S_k}\delta_{v,u\circ\pi}\\
&=&\#\{\pi\in S_k:\ v_i=u_{\pi(i)}\}.
\end{eqnarray*}
Clearly, $\#\{\pi\in S_k:\ v_i=u_{\pi(i)}\}$ depends only on
$\chi(v)$ and $\chi(u)$. So
\begin{eqnarray*}
\big[{d}_{\overleftarrow{v}}\cdot \psi(u)\big]_{c.t.}&=&\#\{\pi\in
S_k:\
v=u\circ \pi\}\\
&=&\left\{\begin{array}{ll}
          0& \hbox{ if } \alpha\neq\beta\\
          \alpha!& \hbox{ otherwise}
          \end{array}
 \right.\\
&=&\alpha!\delta_{\chi(u),\ \chi(v)}\  = \  \alpha!\delta_{x^\alpha,\ x^\beta} .
\end{eqnarray*}

On the other side,
\begin{eqnarray*}
\big[\partial_{\chi(v)}\cdot u\big]_{c.t.}&=&\big[\partial_{\chi(v)}\cdot x^{\alpha}\big]_{c.t.}\\
&=&\alpha!\delta_{\alpha,\beta}.
\end{eqnarray*}
Since $\partial,\ d$ and $[\ ]_{c.t.}$ are linear maps, 
$$\big[{\overleftarrow{f}(d)}\cdot \psi(P)\big]_{c.t.}=\big[\chi(f)(\partial)\cdot P\big]_{c.t.}.$$
\end{proof}

\begin{corollary}\label{cor:inc}
Let $P\in\chi(R)$ and $I$ be any homogeneous ideal in
$R$. Then
$$P\in\chi(I)^{\perp}\Leftrightarrow\psi(P)\in I^{\perp}.$$
\end{corollary}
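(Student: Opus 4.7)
The plan is to chase definitions on both sides and then apply Lemma~\ref{l3} as the bridge. The equivalence is really a translation lemma: the pairing $\langle \cdot, \psi(P)\rangle$ on the non-commutative side corresponds, when restricted to the image of the abelianization, to the standard commutative pairing $\langle \cdot , P\rangle$.

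First I would unfold the right-hand side. By definition,
\[
\psi(P)\in I^{\perp}\iff \bigl[\overleftarrow{f}(d)\cdot \psi(P)\bigr]_{c.t.}=0\quad\text{for every }f\in I,
\]
since $\langle f,\psi(P)\rangle=[\overleftarrow{f}(d)\cdot\psi(P)]_{c.t.}$ and $I$ consists precisely of such $f$. For the left-hand side, I would observe that $\chi$ is a surjective algebra homomorphism, so $\chi(I)$ is an ideal of $\chi(R)=\mathbb{C}[x_1,\ldots,x_n]$ and, by construction, $\chi(I)=\{\chi(f):f\in I\}$. Therefore
\[
P\in \chi(I)^{\perp}\iff \bigl[\chi(f)(\partial)\cdot P\bigr]_{c.t.}=0\quad\text{for every }f\in I,
\]
where I have simply replaced the dummy quantifier $Q\in \chi(I)$ by $\chi(f)$ with $f$ ranging over $I$.

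Now I would apply Lemma~\ref{l3} directly: for each individual $f\in I$ and each $P\in \chi(R)$, the identity
\[
\bigl[\overleftarrow{f}(d)\cdot \psi(P)\bigr]_{c.t.}=\bigl[\chi(f)(\partial)\cdot P\bigr]_{c.t.}
\]
shows that the two families of scalar equations (indexed by $f\in I$) are literally the same. Hence one family vanishes identically in $f$ if and only if the other does, which is exactly the desired biconditional.

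I do not anticipate a genuine obstacle: Lemma~\ref{l3} carries all the analytic content, and the only small point to keep clean is that $\chi(I)$ really is the set-theoretic image $\{\chi(f):f\in I\}$, so that quantifying over $Q\in\chi(I)$ can be replaced by quantifying over $f\in I$ without loss. Everything else is bookkeeping.
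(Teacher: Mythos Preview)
Your proposal is correct and follows essentially the same route as the paper: unfold both perpendicularity conditions in terms of constant terms, replace the quantifier over $Q\in\chi(I)$ by one over $f\in I$ via surjectivity of $\chi$, and invoke Lemma~\ref{l3} as the bridge. The paper presents this as a single chain of biconditionals, but the content is identical.
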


\begin{proof}
\begin{eqnarray*}
P\in\chi(I)^{\perp}&\Leftrightarrow&\forall f\in I,\ <\chi(f),P>=0\\
&\Leftrightarrow&\forall f\in I,\ \big[\chi(f)(\partial)\cdot P\big]_{c.t.}=0\hspace{1cm}\hbox{by definition}\\
&\Leftrightarrow&\forall f\in I,\ \big[{\overleftarrow{f}(d)}\cdot
\psi(P)\big]_{c.t.}=0\hspace{1cm}\hbox{from Lemma \ref{l3}}\\
&\Leftrightarrow&\forall f\in I,\ <f,\psi(P)>=0\hspace{1.5cm}\hbox{by definition}\\
&\Leftrightarrow&\psi(P)\in I^{\perp}.
\end{eqnarray*}
\end{proof}

Corollary~\ref{cor:inc} gives us a linear inclusion $\psi\colon \chi(I)^\perp\hookrightarrow I^\perp$ of inverse systems and a surjection of algebras
$\chi:R/I\rightarrow \chi(R)/\chi(I)$.


\section{Combinatorial Inverse Systems (commutative)}\label{sec:CIS}

A combinatorial Hopf algebra as defined in \cite{ABS} is a pair  $(\mathcal{H},\zeta)$ where $\mathcal H$ is a graded connected Hopf algebra and $\zeta\colon {\mathcal H}\to {\mathbb C}$ is an algebra morphism. The map $\zeta$ serves as a measure for some desired combinatorial invariants and will not be used here. For many examples, $\mathcal H$ is described with a homogeneous basis $\{b_\lambda\}$ such that all algebraic  structure constants are non-negative integers. In the commutative case ${\mathcal H}$ is realized as a subalgebra of ${\mathbb C}[\![X]\!]$, the homogeneous series in countably many variables $X$. Given this, we can restrict $\mathcal H$ to finitely many variables $X_n$ using an evaluation map ${\mathcal H}\hookrightarrow{\mathbb C}[\![ X]\!]\to{\mathbb C}[ X_n]$ where $x=0$ for  all $x\in X- X_n$. If $X_1\subset X_2\subset \cdots$ and $\lim_{n\to\infty} X_n=X$,
we obtain  a family of ideals $I_n=\langle b_\lambda(X_n) \rangle \subseteq {\mathbb C}[ X_n] $ where $b_\lambda(X_n)$ denotes the image of a basis element of $\mathcal H$ under the map ${\mathcal H}\to{\mathbb C}[ X_n]$ described above. We say that a family $\{I_n^\perp\}_{n\ge 0}$ obtained in this way is a \textit{combinatorial inverse system}.

To motivate our definition we present three key examples along with their main features.

\begin{example} \label{ex:sym}
{\sl Symmetric functions}:  The Hopf algebra of symmetric functions \cite{Ma} is $Sym={\mathbb C}[p_1,p_2,\ldots]$ where the comultiplication is given by $\Delta(p_k)=p_k\otimes 1 + 1 \otimes p_k$. The degree of $p_k$ is set to be $k$. There is an embeding 
$Sym\hookrightarrow {\mathbb C}[\![ X]\!]$ with $X=x_1,x_2,\ldots$, given by 
  $$p_k=\sum_{i\ge 1} x_{i} ^k.$$
The space $Sym$ is a combinatorial Hopf algebra satisfying the criteria above with $X_n=x_1,\ldots,x_n$. We can thus construct its combinatorial inverse system. That is, the inverse systems $H_n=\{I_n^\perp\}_{n\ge 1}$ corresponding to the ideals $I_n=\langle p_k(X_n) : k\ge 1\rangle \subseteq  {\mathbb C}[ X_n]$. These spaces are central in mathematics and  appear in a larger class of spaces in invariant and coinvariant theory \cite{Ch,St}. Since $H_n=\{P\in \mathbb{C}[X_n]: \phi(\partial) P=0, \forall \phi\in I_n\}$ it consists of polynomials $P$ that are solutions to the equation $p_2(\partial)P=0$, that is, harmonic polynomials. The spaces $H_n$ are also known as the symmetric harmonics in $n$ variables. They have been extensively studied in several contexts and satisfy very fundamental properties. 
For $n\ge 1$, some examples are

\begin{enumerate}
\item Let $\Delta_n=\prod_{1\le i<j\le n} (x_i-x_j)$ denote the VanDerMonde determinant. Then
  $$H_n=\big\{ P(\partial) \Delta_n : P\in  {\mathbb C}[ X_n] \big\}; $$

\item  The dimension of $H_n$  is $n!$;

\item The space $H_n$ is the left regular
representation of the symmetric group $S_n$;

\item As an $S_n$-module,
 $$H_n = \bigoplus_{k=0}^{n(n-1)\over 2} H_n^{(k)}$$ is graded. The $q$-Frobenius characteristic is
  $$ {\mathcal F}_q(H_n) = \sum_{k=0}^{n(n-1)\over 2} q^k char(H_n^{(k)}) = H_n(X;q)$$
where $char(H_n^{(k)})$ is the symmetric function associated with the representation $H_n^{(k)}$ and $H_n(X;q)$ is the Hall-Littlewood symmetric function \cite{Ma}.
\end{enumerate}

\end{example}

\begin{example}\label{ex:dsym}
 {\sl Diagonal invariants and Diagonal harmonics}: The Hopf algebra of MacMahon symmetric functions (also known as diagonal invariants) is $DSym={\mathbb C}[p_{a,b}: (a,b)\in \mathbb{Z}\times\mathbb{Z}]$ where $p_{0,0}=1$ and the comultiplication is given by 
$\Delta(p_{a,b})=p_{a,b}\otimes 1 + 1\otimes p_{a,b}.$
This space is bigraded and the degree of $p_{a,b}$ is set to be $(a,b)$. 
There is an embeding 
$Sym\hookrightarrow {\mathbb C}[\![ X;Y]\!]$ with $X;Y=x_1,x_2,\ldots,y_1,y_2,\ldots$, given by 
  $$p_{a,b}=\sum_{i\ge 1} x_i^a y_i^b.$$
Taking $X_n;Y_n=x_1,\ldots,x_n,y_1,\ldots y_n$ then $DSym$ is a combinatorial Hopf algebra satisfying the criteria above. We can thus construct its combinatorial inverse system. This is the inverse system $DH_n=\{I_n^\perp\}_{n\ge 1}$ corresponding to the ideals $I_n=\langle p_{a,b}(X_n;Y_n) : (a,b)\in \mathbb{Z}\times\mathbb{Z}\rangle \subseteq  {\mathbb C}[ X_n;Y_n]$. 
These spaces have been extensively studied in recent years. Here is a list of some of the results for these spaces \cite{BGHT, Haim}. For all $n\ge 1$,

\begin{enumerate}
\item Let $E_k=\sum_{i=1}^n y_i^k \partial_{x_i}$. Then 
  $$DH_n=\big\{ P(\partial,E) \Delta_n : P\in  {\mathbb C}[ X_n;Y_n] \big\},$$
where $P(\partial,E)$ denote the operator we get by setting the variables $x_k=\partial_{x_k}$ and $y_k=E_k$ for $1\le k\le n$.

\item  The dimension of $DH_n$  is $(n-1)^{n+1}$;

\item The space $DH_n$ is the so-called parking function
representation of the symmetric group $S_n$;

\item As an $S_n$-module,
 $$DH_n = \bigoplus_{a,b} DH_n^{(a,b)}$$ is bigraded. The $q,t$-Frobenius characteristic is
  $$ {\mathcal F}_{q,t}(DH_n) = \sum_{a,b} q^at^b char(DH_n^{(a,b)}) = < h_1^n, \nabla e_n >$$
where $< h_1^n, \nabla e_n >$ is described in \cite{BGHT, Haim} and is related to Macdonald symmetric functions $H_\lambda(X;q,t)$ of \cite{Ma}.
\end{enumerate}

There are still many open problems regarding $DH_n$. In particular it is not known how to construct an explicit  linear basis.

\end{example}

\begin{example}\label{ex:qsym}
 {\sl Quasi-symmetric functions}:  The Hopf algebra of Quasi-symmetric functions $QSym$ plays a central role in the theory of combinatorial Hopf algebras in \cite{ABS}. It is natural to study it in the context of combinatorial inverse system. Less is known about this system but some remarkable results have been obtained. 

A composition $\alpha\models n$ is a sequence of non-zero positive integers $\alpha=(\alpha_1,\alpha_2,\ldots,$ $\alpha_k)$ where $k\ge 0$ and $n=\alpha_1+\alpha_2+\cdots+\alpha_k$. For $n=0$ there is a unique composition $\alpha=()$, the empty composition. The Hopf algebra of quasi-symmetric functions $QSym$ is the linear span of $\{M_\alpha:\alpha\models n\ge 0\}$. The multiplication is given by the quasi-shuffle
  $$M_\alpha M_\beta =\sum_{\gamma\in \alpha \widetilde{\shuf} \beta} M_\gamma,$$
where for $\alpha=(\alpha_1,\alpha_2,\ldots,\alpha_k)$ and  $\beta=(\beta_1, \beta_2,\ldots, \beta_k)$ we define  $\alpha \widetilde{\shuf} \beta$ recursively as follows.
If $k=0$ or $\ell=0$ then $\alpha \widetilde{\shuf} \beta=\alpha\cdot\beta$ where $\cdot$ denotes the concatenation of lists.
If not,
  $$\alpha \widetilde{\shuf} \beta = \alpha_1\cdot\big((\alpha_2,\ldots,\alpha_k) \widetilde{\shuf} \beta\big) + \beta_1\cdot\big(\alpha \widetilde{\shuf} (\beta_2,\ldots, \beta_k)\big)$$
  $$\quad\qquad + (\alpha_1+\beta_1)\cdot  ((\alpha_2,\ldots,\alpha_k) \widetilde{\shuf} (\beta_2,\ldots, \beta_k)\big).
  $$
  The notation $\gamma\in \alpha \widetilde{\shuf} \beta$ indicates that $\gamma$ is in the support of $\alpha \widetilde{\shuf} \beta$.
The comultiplication is defined by
 $$\Delta(M_\alpha)=\sum_{\beta\cdot\gamma=\alpha} M_\beta \otimes M_\gamma.$$
 The unit is $M_{()}$ and the counit is $\epsilon\colon\QSym\to\mathbb{C}$ where $\epsilon(f)$ is the coefficient of $M_{()}$ in $f$.
 This space is graded by the size of $\alpha$. That is, the degree of $M_\alpha$ is $n$ when $\alpha\models n$.
Again there is an embeding 
$QSym\hookrightarrow {\mathbb C}[\![ X]\!]$ with $X=x_1,x_2,\ldots$ given by 
  $$M_\alpha=\sum_{i_1<i_2<\cdots < i_k} x_{i_1}^{\alpha_1} x_{i_2}^{\alpha_2}\cdots  x_{i_k}^{\alpha_k}.$$
Taking $X_n=x_1,\ldots,x_n$ then $QSym$ is a combinatorial Hopf algebra satisfying the criteria above and we can construct its combinatorial inverse system. The inverse system corresponding to the ideals $I_n=\langle M_\alpha(X_n) : \alpha\models n\ge 1\rangle \subseteq  {\mathbb C}[ X_n]$ is denoted by $SH_n=\{I_n^\perp\}_{n\ge 1}$. 
These spaces have not been extensively studied. Here is a list of some of the partial results and open problems for these spaces \cite{ABB, AB}. For all $n\ge 1$,

\begin{enumerate}
\item We conjecture that 
  $$SH_n=\big\{ P(\partial)Q: P\in  {\mathbb C}[ X_n], \  Q\in\hbox{Soc}(SH_n) \big\},$$
for some set $\hbox{Soc}(SH_n)$ of cardinality $C_{n-1}$, the $(n-1)$-th Catalan number;

\item  The dimension of $SH_n$  is $C_n$ the $n$-th Catalan number;

\item The space $SH_n$ seems to be related to the Temperley-Lieb algebre $TL_n$, yet it is not clear whether this algebra acts on $SH_n$.

\end{enumerate}

\end{example}

These three examples have very rich combinatorial results. More (commutative) combinatorial inverse systems are interesting to study and can be found in the literature \cite{ABB2, St}. 


\section{Symmetric Functions in Non-commuting Variables} \label{sec:SymIS}

To define a non-commutative combinatorial inverse system, we start with a non-commutative combinatorial Hopf algebra $\mathcal{H}$.
Let $\{b_\lambda\}$ be a homogeneous basis for $\mathcal H$ and assume that there is a realization of  ${\mathcal H}$ as a subalgebra of ${\mathbb C}\langle\! \langle X \rangle\! \rangle$, the homogeneous series in countably many non-commuting variables $X$. 
Given this, we can restrict $\mathcal H$ to finitely many variables $X_n$ using an evaluation map ${\mathcal H}\hookrightarrow{\mathbb C}\langle\! \langle X \rangle\! \rangle\to{\mathbb C} \langle X_n \rangle $ where $x=0$ for  all $x\in X- X_n$.
Here ${\mathbb C} \langle X_n \rangle$ is the free associative algebra finitely generated by $X_n$.
 If $X_1\subset X_2\subset \cdots$ and $\lim_{n\to\infty} X_n=X$,
we obtain  a family of ideals $I_n=\langle b_\lambda(X_n) \rangle \subseteq {\mathbb C}\langle X_n\rangle$. The family $\{I_n^\perp\}_{n\ge 0}$ obtained in this way is a {\sl non-commutative combinatorial inverse system}.

\subsection{The combinatorial inverse system $Har_n$}
We trust that our motivating examples in the commutative case convince the reader that combinatorial inverse system are interesting objects to study. We now want to look at the non-commutative anologues of Example~\ref{ex:sym} and Example~\ref{ex:qsym}.
We start with the combinatorial Hopf algebra $NCSym$ of symmetric functions in non-commuting variables \cite{BRRZ,RS,W}. It is simpler to first describe the embeding $NCSym\to {\mathbb C}\langle\! \langle X \rangle\! \rangle$ for $X=x_1,x_2,\ldots$.

A {\sl set
partition} $\Phi$ of $k$ is a set of nonempty subsets
$\Phi_1,\Phi_2,\ldots,\Phi_n\subseteq [k]=\{1,2,\ldots, k\}$ such
that $\Phi_i\cap \Phi_j=\{\}$ for $i\neq j$ and $\Phi_1\cup
\Phi_2\cup \cdots\cup \Phi_n=[k]$. We indicate that $\Phi$ is a set
partition of $k$ by $\Phi\vdash[k]$. The subsets $\Phi_i$ are called
the {\sl parts} of the set partition. The number of nonempty parts is
referred to as the {\sl length} and is denoted by $\ell(\Phi)$. 
A {\sl monomial} with variables in $X$ is a word
$w=w_1w_2\cdots w_k$ with $w_i\in X$. The word $w$ can be viewed as a function
$w:[k]\rightarrow X$. Let $\nabla(w)=\{w^{-1}(x):x\in X\}\backslash
\{\emptyset\}$. Clearly $\nabla(w)$ is a set partition of $[k]$. For
$\Phi\vdash[k]$ we define
$$M_\Phi(X)=\sum_{\nabla(w)=\Phi}w,$$ where the sum is over all $w$
whose corresponding set partition is $\Phi$. For the empty set
partition, we define by convention $M_{\{\}}=1$. For example, when $k=4$ and $\Phi={13.2.4}:=\{\{1,3\},\{2\},\{4\}\}$,
$$M_{13.2.4}=x_1x_2x_1x_3+x_1x_3x_1x_2+x_2x_1x_2x_3+x_2x_3x_2x_1+x_3x_1x_3x_2+x_3x_2x_3x_1 + \cdots .$$
The vector space $NCSym$ has a linear basis
$\{M_\Phi(X)\}_{k\geq0,\Phi\vdash[k]}.$ Its multiplication is given by the multiplication in ${\mathbb C}\langle\! \langle X \rangle\! \rangle$.
Its comultiplication is given by
$$\Delta(M_\Phi) = \sum_{\Psi\cup\Xi=\Phi} M_\Psi \otimes M_\Xi.$$

Let  $X_n=x_1,\ldots,x_n$ and let 
$$I_n=\langle M_\Phi(X_n) : \Phi\vdash [k], k>0\rangle.$$ 
We are interested in computing $Har_n=I_n^{\perp}$. The space $NCSym_n$ linearly spanned by $\{M_\Phi(X_n) : \Phi\vdash [k], k\ge 0\}$ is
known to correspond to the invariants of the symmetric group $S_n$ in non-commutative variables $X_n$~\cite{BRRZ,RS,W}.
{}From Lemma \ref{lem:eq} in non-commuting variables $\varphi\in Har_n\Leftrightarrow
\{\overleftarrow{M_\Phi}(d)d_u\cdot\varphi=0\}_{\Phi\neq\{\}}$. This is a
non-commutative system of (differential) equations containing the equation $\sum d_{x_i}^2 \varphi =0$. For this reason we say that
$Har_n$ is a \textit{non-commuting Harmonic system} related to the symmetric group. There are other non-commutative Harmonic systems related to the symmetric group studied in~\cite{BRRZ,BRZ} but they are much larger.

\begin{corollary}
For a word $w\in {\mathbb C}\langle X_n\rangle$, define
\begin{equation}\Delta_w\stackrel{def}{=}\sum_{\sigma\in S_n}
\sum_{\pi\in S_k}(-1)^{\ell(\sigma)}\sigma\circ
w\circ\pi.
\end{equation}
Then $\Delta_w\in Har_n$ if and only if $\Delta_w=0$
or $\chi(w)=x^{\rho}$, where $\rho=(n-1,n-2,\ldots,1,0)$.
\end{corollary}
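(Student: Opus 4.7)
The plan is to reduce the non-commutative claim to a classical fact about commutative symmetric harmonics, using Corollary~\ref{cor:inc} as the bridge. Setting $\alpha=\chi(w)$, the definition of $\psi$ gives $\sum_{\pi\in S_k} w\circ\pi = \psi(x^\alpha)$. Because renaming letters by $\sigma\in S_n$ commutes with permuting positions by $\pi\in S_k$, a direct reindexing of sums yields
$$\Delta_w \;=\; \sum_{\sigma\in S_n}(-1)^{\ell(\sigma)}\,\sigma\circ\psi(x^\alpha) \;=\; \psi(a_\alpha),$$
where $a_\alpha=\sum_{\sigma\in S_n}(-1)^{\ell(\sigma)}x^{\sigma\alpha}$ is the classical alternant. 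Since $\psi$ is injective, $\Delta_w=0$ iff $a_\alpha=0$, i.e.\ iff $\alpha$ has a repeated entry.

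Next I would apply Corollary~\ref{cor:inc}: $\psi(a_\alpha)\in I_n^\perp=Har_n$ iff $a_\alpha\in\chi(I_n)^\perp$. The key identification is that $\chi(I_n)$ equals the ideal in $\mathbb{C}[X_n]$ generated by the $S_n$-invariants of positive degree. Indeed, $\chi(M_\Phi(X_n))$ is a nonzero scalar multiple of the monomial symmetric function $m_\lambda(X_n)$ where $\lambda$ is the partition of block-sizes of $\Phi$, and as $\Phi$ ranges over all set partitions every such $\lambda$ arises. Hence $\chi(I_n)^\perp$ is precisely the space $H_n$ of commutative symmetric harmonics from Example~\ref{ex:sym}.

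It remains to decide when $a_\alpha\in H_n$. By construction $a_\alpha$ is sign-anti-invariant under the $S_n$-action, so it must lie in the sign-isotypic component of $H_n$. Because $H_n$ carries the regular representation of $S_n$ (property (3) of Example~\ref{ex:sym}) and is graded by total degree, this sign component is one-dimensional, sits in top degree $\binom{n}{2}$, and is spanned by the Vandermonde $\Delta_n=a_\rho$. Using the standard factorisation $a_\alpha=\pm\,s_\lambda(X_n)\cdot a_\rho$, where $\lambda$ is obtained from sorting $\alpha$ decreasingly and subtracting $\rho$, the condition $a_\alpha\in H_n$ forces $\lambda=0$, i.e.\ the multiset of exponents of $\alpha$ is $\{0,1,\dots,n-1\}$; the normalised representative of that orbit is exactly $\chi(w)=x^\rho$. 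Conversely, when $\chi(w)=x^\rho$ we have $\Delta_w=\psi(a_\rho)\neq0$, and $a_\rho\in H_n$ is classical, so $\Delta_w\in Har_n$.

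The two non-trivial steps are the identification of $\chi(I_n)$ with the symmetric ideal, which is a short direct computation from the definition of $M_\Phi$, and the classical structure of $H_n$ as the regular representation with one-dimensional sign component spanned by $a_\rho$; once both are in hand the result follows by chaining the equivalences through Corollary~\ref{cor:inc}.
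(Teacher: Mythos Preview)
Your proof is correct and follows essentially the same route as the paper: write $\Delta_w=\psi(A_\alpha)$ for the commutative alternant $A_\alpha$, invoke Corollary~\ref{cor:inc} to reduce membership in $Har_n$ to membership of $A_\alpha$ in $H_n=\chi(I_n)^\perp$, and then use that the sign representation occurs exactly once in the regular representation $H_n$. The only cosmetic differences are that the paper identifies $\chi(I_n)$ directly as $\langle p_k(X_n)\rangle$ rather than via $\chi(M_\Phi)=c\,m_\lambda$, and that it does not spell out the factorisation $a_\alpha=\pm s_\lambda a_\rho$, appealing instead only to the multiplicity-one statement; your extra detail there is harmless and arguably clarifies why no other $a_\alpha$ can land in $H_n$.
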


\begin{proof} It is easy to see that $\chi(I_n)=\langle p_k(X_n)\rangle$ the ideal considered in Example~\ref{ex:sym}. Hence $\chi(I_n)^\perp=H_n$. Let
$$A_{\alpha}\stackrel{def}{=}\sum_{\sigma\in S_n}(-1)^{\ell(\sigma)}\sigma\circ x^{\alpha}.$$
Then $\Delta_w=\psi(A_{\alpha})$ when $\chi(w)=x^{\alpha}$.
Using Corollary \ref{cor:inc},
$$\Delta_w\in Har_n\Leftrightarrow A_{\alpha}\in H_n.$$
Since $H_n$ is isomorphic to the left regular
representation of $S_n$ (see Example~\ref{ex:sym}), there is only one
occurrence of the sign representation (obtained by taking
$A_{\rho}$). Hence $A_{\alpha}\in H_n \Leftrightarrow
\alpha=\rho$ or $A_{\alpha}=0$.
\end{proof}

Thus $d_u\Delta_w\in Har_n$ for all $u$ and
$\chi(w)=x^\rho$. This gives us a copy of the left regular
representation in $Har_n$. For $n=1$ and $2$, this is all of
$Har_n$. For $n=3$, we see more.
We have computed $Har_n$ for $n=1,2,3,4$ and the following
surprising fact arises: $\dim Har_n=1,2,9,946,\ldots$.
The Hilbert series of $Har_n$ is
$$\hbox{Hilb}_{Har_n}(t)=\sum_{d=0}^{\infty}\dim(Har_n^{(d)})t^d,$$
where $Har_n^{(d)}$ is the homogeneous component of degree $d$ in
$Har_n$.
We list the following data obtained using computers:
\begin{itemize}
    \item $n=0,\ \hbox{Hilb}_{Har_0}(t)=1,$
    \item $n=1,\ \hbox{Hilb}_{Har_1}(t)=1,$
    \item $n=2,\ \hbox{Hilb}_{Har_2}(t)=1+t,$
    \item $n=3,\ \hbox{Hilb}_{Har_3}(t)=1+2t+3t^2+3t^3,$
    \item $n=4,\ \hbox{Hilb}_{Har_4}(t)=1+ 3t + 8t^2 + 20t^3 + 47t^4 + 102t^5 + 197t^6 +
                                        308t^7 +\hbox{\hspace{3.52cm}}248t^8 + 12t^9 $,
    \item $n=5,\ \hbox{Hilb}_{Har_5}(t)=1 + 4t + 15t^2 + 55t^3 + 199t^4 + 712t^5 + 2520t^6+\cdots $
\end{itemize}

\begin{conj}\label{co:Har} The non-commuting Harmonic system $Har_n$ is finite dimensional for any $n$.
\end{conj}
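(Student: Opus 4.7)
The plan is to establish a universal degree bound $D_n$ such that $Har_n^{(d)}=0$ for every $d>D_n$. Since each homogeneous component of $\mathbb{C}\langle X_n\rangle$ is finite dimensional, this immediately yields $\dim Har_n<\infty$.

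The starting point is already in place in the excerpt. By Lemma~\ref{lem:eq}, $Har_n$ is the simultaneous solution space of the differential equations $\overleftarrow{M_\Phi}(d)\cdot P=0$ ranging over all set partitions $\Phi$, and by Lemma~\ref{lem:cone} this space is closed under every $d_u$. Corollary~\ref{cor:inc} embeds the classical symmetric harmonics $H_n$, of dimension $n!$ and top degree $\binom{n}{2}$, into $Har_n$ via the linear map $\psi$, so a copy of the regular representation of $S_n$ already sits inside. My first step would be to prove a complementary upper bound by combining this commutative piece with constraints coming from the full family $\{M_\Phi(X_n)\}_\Phi$. Organizing the monomials of a candidate $P\in Har_n$ according to their set-partition shape $\nabla(w)$, the non-commutative generators should impose balance relations across shape orbits that leave only finitely many degrees of freedom in each homogeneous component and none at all once the degree exceeds some explicit $D_n$. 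The numerical data (top degrees $1,3,9$ for $n=2,3,4$) suggests that $D_n$ grows polynomially in $n$.

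The principal obstacle is that $\mathbb{C}\langle X_n\rangle$ is not Noetherian, so cofiniteness of $I_n$ cannot be deduced from any finite presentation; in particular, the Buchberger-style approach is not guaranteed to terminate, which is precisely the difficulty Section~\ref{sec:SymIS} emphasizes. The most promising bypass is a transfer from the Quasi-symmetric inverse system $SH_n$, which by the companion result announced in Section~\ref{sec:QSymIS} is known to be finite dimensional. What one wants is a degree-preserving linear injection from $Har_n$ into an explicit finite construction built from $SH_n$ -- for instance, a tensor power of $SH_n$ indexed by set partitions, or a twisted variant of $SH_n$ whose composition data has been enriched by a set-partition coloring. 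Building such an injection, and reconciling the intricate dimensions $1,1,2,9,946,\ldots$ of $Har_n$ with the Catalan numerology $C_n$ that governs $SH_n$, is where the genuinely new combinatorial input is required, and it is the step I expect to be the main obstacle to completing the proof.
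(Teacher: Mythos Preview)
The statement you are attempting to prove is \emph{Conjecture}~\ref{co:Har}: the paper offers no proof, only computational evidence for $n\le 4$ (partial for $n=5$) and the remark that finiteness would imply decidability of the Gr\"obner basis of $I_n$. So there is no ``paper's proof'' to compare against; the question is whether your proposal constitutes a proof at all.

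It does not. What you have written is an outline whose decisive step is explicitly left open: you propose a degree-preserving injection of $Har_n$ into a finite object ``built from $SH_n$'' or $SHar_n$, but give no construction, and you yourself identify this as ``the main obstacle.'' Worse, the natural relationships run against you. Since $NCSym_n\subset NCQSym_n$ (each $M_\Phi$ is the sum of $\mathcal{M}_A$ over set compositions $A$ refining $\Phi$), the corresponding ideals satisfy $I_n^{NCSym}\subseteq I_n^{NCQSym}$, hence $SHar_n\subseteq Har_n$; the finiteness of $SHar_n$ established in Section~\ref{sec:QSymIS} is therefore a \emph{lower} bound on $Har_n$, not an upper one. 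Likewise, Corollary~\ref{cor:inc} gives $\psi\colon H_n\hookrightarrow Har_n$, again the wrong direction. Any injection of $Har_n$ into a tensor power or colored variant of $SH_n$ would have to be genuinely new and cannot be inferred from the containments available in the paper. Until such a map is produced, the argument has no content beyond restating the conjecture and the surrounding difficulties already noted in Section~\ref{sec:SymIS}.
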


This conjecture would imply that $I_n$ has a decidable Gr\"obner basis for all $n$. This is a striking fact on its own. Remark that in contrast, the Harmonic systems computed in~\cite{BRRZ,BRZ} are infinite.

\subsection{The subspace of alternating elements}

In the commutative examples it turned out that the  multiplicity of the alternating representation has very combinatorial behavior.
It is only natural to do the same for $Har_n$.
We now study the intersection of the inverse system $Har_n$ with the space of
alternating functions. 

A {\sl set composition} $A$ of a set $[n]$ is a list $A=(A_1,A_2,\ldots,A_k)$ such that $A_i\ne \emptyset$ and $\{A_1,A_2,\ldots,A_k\}\vdash[n]$. We denote this by $A\models[n]$.
A function $f$ in $\mathbb{C}\langle X_n\rangle$ is {\sl alternating}  if $\sigma\circ f=(-1)^{\ell(\sigma)}f$, for all $\sigma \in S_n $. Let $Alt_n$ be the set of  all
alternating functions in $n$ non-commutating variables. We will construct a basis of $Alt_n$ which is given by $\{ \A_{\Phi}\}$ indexed by set compositions
$\Phi=(\Phi_1,\Phi_2,\ldots,\Phi_k)$  with
$\min\Phi_i<\min\Phi_{i+1}$
 and $n-1\leq k\leq n$. These set compositions may be identified with set partitions of size $n-1\leq k\leq n$ where the parts are given in the prescribed order.
 
Conjecture~\ref{co:Har} would imply that
$$deg(Har_n^{(d)}\cap Alt^{(d)}_n)=0$$
when $d$ is sufficiently large, where $Alt_n^{(d)}$ and $Har_n^{(d)}$ denote the respective homogeneous component of degree $d$ in
in each space.

A \textit{generalized set composition} $A$ of $[d]$ is a list of subsets
$(A_1,A_2,\ldots,A_k)$, where $A_i\subseteq[d]=\{1,2,\ldots,d\}$ such that
$A_i\cap A_j=\emptyset$ for $i\neq j$ and
$A_1\cup A_2\cup\ldots\cup A_k=[d]$ (some parts may be empty). There is a one-to-one
correspondence between words and generalized set compositions
$w\leftrightarrow  A$. Since we use generalized set compositions
of $d$ to decide the positions of variables in the corresponding
words of degree $d$, we assume $k=n$ the number of variables.
For example, let $n=3$ and $w=x_1x_3x_1$. Then $d=3$, the degree of $w$, and
$ A=(\{1,3\},\emptyset,\{2\})$. For convenience, we write $ A=13.\emptyset.2$.

Given a generalized set composition $A=(A_1,A_2,\ldots,A_n)\models [d]$ we define an alternating function
$$\A_{A}=\sum_{\sigma\in S_n}(-1)^{\ell(\sigma)}\sigma\circ A,$$
where in the sum $A$ is the corresponding word. For example, let $d=3$, 
$n=3$ and $A=13.2.\emptyset$. Then
$$\A_A=x_1x_2x_1-x_2x_1x_2-x_3x_2x_3-x_1x_3x_1+x_2x_3x_2+x_3x_1x_3.$$
Two properties hold:

(1) $\A_A=(-1)^{\ell(\pi)}\A_{\pi\circ A}$ for all $\pi\in
S_n$, where
$\pi\circ A=(A_{\pi(1)},A_{\pi(2)},\ldots,A_{\pi(n)})$;

(2) $\A_A=0$ if $\#\{A_i: A_i=\emptyset\}\geq2$;

\noindent
{}From (1) and (2) above, it is clear that any $\A_A =\pm \A_\Phi$  for $\Phi=(\Phi_1,\Phi_2,\ldots,\Phi_n)$ a
generalized set composition  where at most $\Phi_n=\emptyset$ and the non-empty parts are ordered by
$\min\Phi_i<\min\Phi_{i+1}$.  For $\Phi$ as described, each $\A_\Phi$  has distinct support, hence $\{\A_\Phi : \Phi\models [d] \hbox{ generalized, ordered, }\Phi_{n-1}\ne \emptyset\}$
is a basis of $Alt_n^{(d)}$.

For any generalized set composition
$A=(A_1,A_2,\ldots,A_n)$ and a given  coloring function
$\epsilon:[n]\rightarrow\{\pm1\}$, we get a two-colored generalized
set composition
$$A^{\epsilon}=(A_1^{\epsilon_1},A_2^{\epsilon_2},\ldots,A_n^{\epsilon_n}),$$
where $\epsilon_i=\epsilon(i)$. Let $T=\epsilon^{-1}(1)$. Define
$$\A_{A^{\epsilon}}=\sum_{\sigma\in S_{T}}(-1)^{\ell(\sigma)}\sigma\circ A^{\epsilon},$$
where in the sum $A^{\epsilon}$ is the word corresponding to
$A$ and $\sigma$ permutes the indices given by $T$ while fixing the
other parts. For convenience, we write
$2.\overline{13}.4.\overline{\emptyset}$ instead of
$2^1.13^{-1}.4^1.\emptyset^{-1}$. Then
$$\A_{2.\overline{13}.4.\overline{\emptyset}}=x_2x_1x_2x_3-x_2x_3x_2x_1.$$

\begin{proposition} \label{prop:alt}
Let $\Psi\vdash [k]$ be a  set partition,
$A\models[\ell]$ be a generalized set composition
 and $\Phi\models[d]$ be an ordered generalized set composition with at most one $\emptyset$.
 Then $\overleftarrow{M_{\Psi}}(d)d_{\overleftarrow{A}}\cdot \A_{\Phi}$ is $0$ or
$\pm \A_{\Theta^{\epsilon}}$ for some two-colored generalized set
composition $\Theta^{\epsilon}$.
\end{proposition}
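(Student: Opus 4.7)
The plan is to expand both sides in their monomial bases and track exactly which permutations contribute. Let $w_\Phi$ and $w_A$ denote the words associated with the (generalized) set compositions $\Phi$ and $A$. Writing
$$\A_\Phi=\sum_{\sigma\in S_n}(-1)^{\ell(\sigma)}\sigma\circ w_\Phi
\qquad\text{and}\qquad
\overleftarrow{M_\Psi}(d)\,d_{\overleftarrow{A}}
   \;=\;\sum_{\nabla(v)=\Psi} d_{\overleftarrow{w_A v}},$$
the left-hand side becomes $\sum_{\sigma,v}(-1)^{\ell(\sigma)}\,d_{\overleftarrow{w_A v}}(\sigma\circ w_\Phi)$, which is a signed sum of suffixes obtained by stripping the prefix $w_A v$ from $\sigma\circ w_\Phi$ whenever this stripping is possible. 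For each $\sigma$ whose image has $w_A$ as a prefix, the word $v=(\sigma\circ w_\Phi)|_{[\ell+1,\ell+k]}$ is uniquely determined, and that $\sigma$ contributes exactly when this $v$ satisfies $\nabla(v)=\Psi$.

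Next, I would identify the surviving permutations through two combinatorial consistency checks. Setting $S_1=\{s:\Phi_s\cap[1,\ell]\neq\emptyset\}$, requiring $\sigma\circ w_\Phi$ to start with $w_A$ forces (i) for each $s\in S_1$ the positions $\Phi_s\cap[1,\ell]$ to lie inside a single part $A_{t(s)}$, and (ii) $\sigma(s)=t(s)$ for $s\in S_1$. Requiring the subsequent $k$ letters to form a word with set partition $\Psi$ is then an intrinsic combinatorial condition on $\Phi$ and $\Psi$: the partition of $[1,k]$ induced by the shifted parts $\{(\Phi_s-\ell)\cap[1,k]:s\}$ must equal $\Psi$. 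If either (i) or this refinement condition fails, every term in the expansion vanishes, yielding the ``$0$'' case.

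Assuming both conditions hold, fix any extension $\sigma_0\in S_n$ of the partial map forced on $S_1$; then every contributing $\sigma$ has the form $\sigma_0\rho$ for a unique $\rho\in S_T$ with $T=[n]\setminus S_1$, and $\ell(\sigma)\equiv\ell(\sigma_0)+\ell(\rho)\pmod 2$. Writing $y_0=(\sigma_0\circ w_\Phi)|_{[\ell+k+1,d]}$ and letting $\rho'=\sigma_0\rho\sigma_0^{-1}\in S_{T'}$ with $T'=\sigma_0(T)=[n]\setminus t(S_1)$, the contribution of $\sigma=\sigma_0\rho$ equals $\rho'\circ y_0$, and $\ell(\rho')=\ell(\rho)$. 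The entire expression then collapses to
$$(-1)^{\ell(\sigma_0)}\sum_{\rho'\in S_{T'}}(-1)^{\ell(\rho')}\,\rho'\circ y_0.$$
Encoding $y_0$ as a generalized set composition $\Theta=(\Theta_1,\dots,\Theta_n)$ (with $\Theta_u$ recording the positions of $x_u$ in $y_0$) and taking $\epsilon(u)=+1$ for $u\in T'$ and $\epsilon(u)=-1$ for $u\in t(S_1)$, this is precisely $\pm\,\A_{\Theta^\epsilon}$.

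The hypothesis that $\Phi$ has at most one empty part is used here to ensure that the resulting $\Theta^\epsilon$ is a legitimate two-colored generalized set composition; if its combinatorics would force two empty parts then property (2) of $\A$ sends the output to $0$, consistent with the ``$0$'' case. The main obstacle I anticipate is the second consistency check: showing that the sum over $v$ with $\nabla(v)=\Psi$ is absorbed cleanly into the freedom of $\sigma|_T$, so that no extra $\sigma$-dependence sneaks in. This is exactly where the set-partition (rather than set-composition) structure of $\Psi$ matters, and where the conjugation $\rho\mapsto\sigma_0\rho\sigma_0^{-1}$ must be tracked carefully to produce the unambiguous sign $\pm(-1)^{\ell(\sigma_0)}$ in front of $\A_{\Theta^\epsilon}$.
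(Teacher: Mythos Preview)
Your plan is correct and follows essentially the same strategy as the paper: expand, isolate the consistency conditions (your condition (i) together with the refinement condition are exactly the paper's Equations~(\ref{eq:cap}) and~(\ref{eq:cap2})), and recognize the surviving signed sum over $S_{T'}$ as $\pm\A_{\Theta^\epsilon}$ with $\epsilon^{-1}(+1)=T'=\{i:A_i=\emptyset\}$. The only organizational difference is that the paper factors the computation into two stages---first showing $d_{\overleftarrow{A}}\cdot\A_\Phi$ is $0$ or $\pm\A_{\Gamma^\epsilon}$, then that $\overleftarrow{M_\Psi}(d)\cdot\A_{\Gamma^\epsilon}$ is $0$ or $\A_{st(\Gamma\setminus\Psi)^\epsilon}$---whereas you treat both operators at once; one small caveat is that your claim ``$\ell(\rho')=\ell(\rho)$'' is only needed (and only true in general) modulo $2$, i.e.\ as an equality of signs under conjugation.
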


\begin{proof} We prove it case by case.
Suppose that 
\begin{equation}\label{eq:cap}
 A_1=A_1\cap\Phi_{i_1},\quad
 A_2= A_2\cap\Phi_{i_2},\quad\ldots,\quad  A_n= A_n\cap\Phi_{i_n}
 \end{equation}
  for
some distinct $i_1,i_2,\ldots, i_n$. Hence, there are some terms in
$\A_{\Phi}$ beginning with the word corresponding to $ A$. Choose a
fixed sequence $i_1,i_2,\ldots, i_n$ and reorder $\Phi$ as
$\Phi^{\tau}=(\Phi_{i_1},\Phi_{i_2},\ldots,\Phi_{i_n})$, where
$\tau$ is the permutation corresponding to this sequence. Then there
is one word in the remaining terms in $d_{\overleftarrow{ A}}\cdot
\A_{\Phi}$ corresponding to $st(\Phi^{\tau}\setminus A)$, where
$st(\Phi^{\tau}\setminus A)$ means deleting the numbers in $ A$
from $\Phi^\tau$ then standardizing it (that is, subtracting $\ell$ from all the numbers in $\Phi^{\tau}\setminus A$). Define a two-colored generalized
set composition $st(\Phi^{\tau}\setminus A)^{\epsilon}$ by
$\epsilon_i=-1$ if $ A_i\neq\emptyset$ and $1$ otherwise. Then
$d_{\overleftarrow{ A}}\cdot
\A_{\Phi}=(-1)^{\ell(\tau)}\A_{st(\Phi^{\tau}\setminus A)^{\epsilon}}$.
If Equation~(\ref{eq:cap}) is not satisfied, then there is no term in
$\A_{\Phi}$ beginning with the word corresponding to $A$.
Therefore, $d_{\overleftarrow{ A}}\cdot \A_{\Phi}=0$.

Hence, we may assume that Equation~(\ref{eq:cap}) is satisfied and let $\Gamma=st(\Phi^{\tau}\setminus A)$. Then
$\overleftarrow{M_{\Psi}}(d)d_{\overleftarrow{ A}}\cdot
\A_{\Phi}=(-1)^{\ell(\tau)}\overleftarrow{M_{\Psi}}(d)\cdot
\A_{\Gamma^{\epsilon}}$. 
The set partition $\Psi=\{\Psi_1,\Psi_2,\ldots,\Psi_r\}$ for some $r\le n$.
Suppose 
\begin{equation}\label{eq:cap2}
 \Psi_1=\Psi_1\cap \Gamma_{i_1},\quad
 \Psi_2= \Psi_2\cap \Gamma_{i_2},\quad\ldots,\quad  \Psi_r= \Psi_r\cap \Gamma_{i_n}
 \end{equation}
  for
some distinct $i_1,i_2,\ldots, i_r$. Since $\Psi$ is a set, we can reorder the $\Psi_i$ as needed. For any monomial in $M_\Psi$ there is a unique monomial in $\A_{\Gamma^{\epsilon}}$ with the appropriate sign which gives
$\overleftarrow{M_{\Psi}}(d)\cdot
\A_{\Gamma^{\epsilon}}=\A_{st(\Gamma\setminus\Psi)^{\epsilon}}.$
If Equation~(\ref{eq:cap2}) is not satisfied, then
there is no term in $\A_{st(\Phi^{\tau}\setminus A)^{\epsilon}}$
beginning with the word $\sigma\circ\Psi$ for all $\sigma\in
S_n/S_L$. Hence $\overleftarrow{M_{\Psi}}(d)\cdot
\A_{st(\Phi^{\tau}\setminus A)^{\epsilon}}=0$.

In all cases, if any of Equation~(\ref{eq:cap}) or Equation~(\ref{eq:cap2}) is not satisfied, then
$\overleftarrow{M_{\Psi}}(d)d_{\overleftarrow{ A}}\cdot
\A_{\Phi}=0$. Otherwise
$$\overleftarrow{M_{\Psi}}(d)d_{\overleftarrow{ A}}\cdot
\A_{\Phi}=      (-1)^{\ell(\tau)}
\A_{st(st(\Phi^{\tau}\setminus A)\setminus\Psi)^{\epsilon}}\, .
$$
We let $\Theta=st(st(\Phi^{\tau}\setminus A)\setminus\Psi)$ and the proof is completed.
\end{proof}

We remark that in the proof of Proposition~\ref{prop:alt} we could reorder $A$ so that $\tau$ is the identity. That is, we can assume that $A$ is an ordered generalized composition. 

If we want to compute $Har_n^{(d)}\cap Alt^{(d)}_n$, we need to solve the equations
  $$ \overleftarrow{M_{\Psi}}(d)d_{\overleftarrow{A}}\cdot P=0,\  \forall  \Psi, A$$
  and $P\in Alt_n^{(d)}$. It is easy to see that for fixed $A$ and $\Psi$, the possible $\Theta^\epsilon$ are all linearly independent. This gives us a system of linear equations, 
  \begin{equation}\label{eq:se}
   [\overleftarrow{M_{\Psi}}(d)d_{\overleftarrow{A}}\cdot P]_{\A_{\Theta^\epsilon}} =0
 \end{equation}
 where $[f]_{\A_{\Theta^\epsilon}} $ denotes the coefficient of ${\A_{\Theta^\epsilon}}$ in $f$ and
  $$ P=\sum_{\Phi\models [d]\  ordered} c_\Phi\A_\Phi.$$
  The system~(\ref{eq:se}) is explicit and may be easier to handle than the full space $Har_n$. As a weaker conjecture we propose
  
  \begin{conj} The solution for the linear equations in the System~(\ref{eq:se}) is all of $Alt_n^{(d)}$ for large $d$. \end{conj}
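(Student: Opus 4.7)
The weaker conjecture asserts that, once $d$ is sufficiently large, the linear system~(\ref{eq:se}) has only the trivial solution in $Alt_n^{(d)}$; equivalently, $Har_n^{(d)}\cap Alt_n^{(d)}=0$. The plan is to leverage Proposition~\ref{prop:alt}, which gives a complete combinatorial description of the matrix of this system in the basis $\{\A_\Phi\}$ indexed by ordered generalized set compositions $\Phi\models [d]$ with at most one empty part.

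Writing $P=\sum_\Phi c_\Phi\A_\Phi$, each triple $(\Psi,A,\Theta^\epsilon)$ yields by Proposition~\ref{prop:alt} a linear equation $\sum_\Phi \varepsilon_\Phi c_\Phi = 0$, where $\varepsilon_\Phi\in\{0,\pm 1\}$ is nonzero exactly when the iterated truncation $st(st(\Phi^\tau\setminus A)\setminus\Psi)$ equals $\Theta$ with the prescribed two-coloring. The main step is to exhibit, for each $\Phi$, a witnessing triple $(\Psi_\Phi,A_\Phi,\Theta_\Phi^\epsilon)$ so that the resulting square submatrix is unitriangular with respect to some chosen partial order on ordered generalized set compositions. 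A natural candidate is to order the $\Phi$'s by a refinement of the lex order on their non-empty parts, taking $A_\Phi$ to strip off one designated part and $\Psi_\Phi$ a one-part set partition extracting the next. This would reduce each putative non-zero coefficient $c_\Phi$ in degree $d$ to a coefficient indexed by a strictly simpler alternating harmonic, and iterating drives the degree below $\binom{n}{2}$.

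For the base case I would invoke Corollary~\ref{cor:inc}: since $\chi(I_n)=\langle p_k(X_n)\rangle$, any alternating harmonic whose image under $\chi$ is nonzero pulls back from the commutative alternating harmonic subspace, which is one-dimensional and concentrated in degree $\binom{n}{2}$. The remaining ``purely non-commutative'' alternating elements, lying in $\ker\chi$, would be handled via Lemma~\ref{lem:cone}: $Har_n$ is closed under the derivations $d_u$, so a hypothetical element $P\in Har_n^{(d)}\cap Alt_n^{(d)}$ of large degree would propagate an entire family of non-trivial derivatives into lower degrees, which the inductive step (or the computed Hilbert series in small $n$) would contradict.

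The hard part will be producing the witnessing triples uniformly. The reordering permutation $\tau$ and its sign $(-1)^{\ell(\tau)}$ in Proposition~\ref{prop:alt} must be tracked so that diagonal entries do not cancel under the symmetrization built into $\A_\Phi$, and the choice of $\Psi_\Phi$ must cooperate with the combinatorics of two-colored truncation. More seriously, the purely non-commutative alternating elements are invisible to the commutative reduction through $\chi$, and excluding them in high degree seems to require a genuinely non-commutative identity — essentially a statement that no analogue of the VanDerMonde element survives above degree $\binom{n}{2}$. Producing such an identity, or equivalently an effective degree bound beyond which the unitriangular family of witnesses certainly exists, strikes me as the true content of the conjecture and the step most resistant to a purely bookkeeping argument.
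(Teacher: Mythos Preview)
The statement you are attempting is presented in the paper as an open \emph{conjecture}, not a theorem. The paper gives no proof; the only support offered is computer verification through $n=5$ and the remark that the authors ``are hopeful that after a few well organized elementary transformations one can obtain an upper triangular system.'' There is therefore no proof in the paper against which to compare your attempt.

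Your proposal is not a proof either, and you say as much in the final paragraph: producing the witnessing triples $(\Psi_\Phi, A_\Phi, \Theta_\Phi^\epsilon)$ uniformly, and ruling out purely non-commutative alternating harmonics in high degree, is flagged as ``the true content of the conjecture and the step most resistant to a purely bookkeeping argument.'' The unitriangular strategy you outline is exactly the heuristic the paper already records, articulated in more detail but not advanced. Two side remarks on the scaffolding you propose: Corollary~\ref{cor:inc} supplies the inclusion $\psi\colon H_n\hookrightarrow Har_n$, not a retraction $Har_n\to H_n$ via $\chi$, so your base-case reduction to the commutative alternating piece does not follow from that corollary as stated; and closure of $Har_n$ under $d_u$ (Lemma~\ref{lem:cone}) by itself yields no contradiction, since an infinite-dimensional $Har_n$ would be fully consistent with it.
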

  
  This system of linear equations can be generated easily by computer and we have checked that the conjecture holds up to $n=5$.
  There is also more structure in this system and we are hopeful that after a few well organized elementary transformations one can obtain an upper triangular system. 
   

\section{Quasi-symmetric functions in non-commutative variables}\label{sec:QSymIS}

A second example of a non-commutative combinatorial inverse system is given by quasi-symmetric functions in non-commutative variables~\cite{BZ}. We have recently shown~\cite{AB2} that the associated inverse systems are finite. The proof does not give the quotient,
which is still an open problem. However, we present the result here as further evidence that combinatorial inverse systems are special and as support for the conjectures in Section~\ref{sec:SymIS}.

The combinatorial Hopf algebra $NCQSym$ of quasi-symmetric functions in non-commutative variables is the vector space freely generated by the set $\{\mathcal{M}_A: A\models[d],d\ge 0\}$. Again we let $()$ be the unique set composition of $[0]$.
The multiplication is given by  the quasi-shuffle of set compositions. That is, for $A=(A_1,A_2,\ldots,A_k)\models[d]$ and $B=(B_1,B_2,\ldots, B_\ell)\models [q]$ we define 
$A \widetilde{\shuf} B^{\uparrow d}$ recursively. The notation $B^{\uparrow d}$ means that we add $d$ to each entry of $B$.
If $k=0$ or $\ell=0$ then $A \widetilde{\shuf}B^{\uparrow d}=A\cdot B$ where $\cdot$ denotes the concatenation of lists.
If not,
  $$A \widetilde{\shuf} B^{\uparrow d} = A_1\cdot\big((A_2,\ldots,A_k) \widetilde{\shuf} B^{\uparrow d}\big) + B^{\uparrow d}_1\cdot\big(A \widetilde{\shuf} (B^{\uparrow d}_2,\ldots, B^{\uparrow d}_k)\big)$$
  $$\quad\qquad + (A_1\cup B^{\uparrow d}_1)\cdot  ((A_2,\ldots,A_k) \widetilde{\shuf} (B^{\uparrow d}_2,\ldots, B^{\uparrow d}_k)\big).
  $$
We then define
  $$\mathcal{M}_A \mathcal{M}_B =\sum_{C\in A \widetilde{\shuf} B^{\uparrow d}} \mathcal{M}_C$$
where $C\in A \widetilde{\shuf} B^{\uparrow d}$  indicates that $C$ is in the support of $A \widetilde{\shuf} B^{\uparrow d}$.
The comultiplication is defined by
 $$\Delta(\mathcal{M}_A)=\sum_{B\cdot C=A} \mathcal{M}_{st(B)} \otimes \mathcal{M}_{st(C)}.$$
Here, $st(-)$ is the standardization map defined as follows. For $B\cdot C=A\models [d]$, we have that $B\models S$ and $C\models T$ where $S\cup T=[d]$ and $S\cap T=\emptyset$.  For any $S\subseteq[d]$, there is a unique order preserving map $\phi\colon \{1,2,\ldots, |S|\} \to S$. For any $B\models S$, we let $st(B)\models \{1,2,\ldots, |S|\}$ be the unique set composition obtained using $\phi$, in other words $st(B)=\phi^* B$. 
The space $NCQSym$ is graded by $\deg(\mathcal{\mathcal{M}}_A)=d$ when $A\models[d]$. 

As seen in \cite{BZ}, there is  an embeding 
$NCQSym\hookrightarrow {\mathbb C}\langle\!\langle X\rangle\!\rangle$ with $X=x_1,x_2,\ldots$ given by 
  $$\mathcal{M}_A=\sum_{\tilde\nabla(w)=A} w,$$
where $\tilde\nabla(w)=( w^{-1}(x_i) )_{i=1}^\infty \setminus \emptyset$. In other words, the infinite sequence $( w^{-1}(x_i) )_{i=1}^\infty $ has only finitely many non-empty parts. After removing the empty parts, we obtain a set composition denoted by  $\tilde\nabla(w)$.

Taking $X_n=x_1,\ldots,x_n$  then $NCQSym$ is a combinatorial Hopf algebra satisfying the criteria above and we can construct its combinatorial inverse system. This is the inverse system $SHar_n=\{I_n^\perp\}_{n\ge 1}$ corresponding to the ideals $I_n=\langle \mathcal{M}_A(X_n) : A\models [d]\ge 1\rangle \subseteq  {\mathbb C}\langle X_n\rangle$. 
These spaces have not been studied extensively. The only known theorem is the following:

\begin{theorem}
$\dim(SHar_n)<\infty$.
\end{theorem}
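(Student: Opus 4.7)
The plan is to show directly that $\dim(R_n/I_n)<\infty$ where $R_n = \mathbb{C}\langle X_n\rangle$; since $I_n$ is homogeneous, $R_n = I_n \oplus SHar_n$, so this is equivalent. The strategy has two layers: first bound the commutative quotient, then control the non-commutative ``corrections.''

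For the first layer, I would use the commutativization $\chi$ from Section~\ref{sec:NCIS}. Directly from the definition of $\mathcal{M}_A$ one checks that $\chi(\mathcal{M}_A(X_n)) = M_\alpha(X_n)$, where $\alpha=(|A_1|,\ldots,|A_k|)$ is the composition of block sizes; hence $\chi(I_n)$ coincides with the commutative quasi-symmetric ideal of Example~\ref{ex:qsym}. By the result cited there from \cite{AB}, $\dim \mathbb{C}[X_n]/\chi(I_n) = C_n$, and moreover this quotient is concentrated in degrees $\le n-1$. We therefore obtain a graded surjection $R_n/I_n \twoheadrightarrow \mathbb{C}[X_n]/\chi(I_n)$; in particular, in degrees $d\ge n$ the entire space $R_n^{(d)}/I_n^{(d)}$ equals the kernel $K^{(d)}$, which is the image of $\ker\chi$ in $R_n/I_n$, i.e., the span of commutator-containing expressions modulo $I_n$.

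For the second layer, I would harvest commutator relations already present in $I_n$. Whenever $A = (A_1,\ldots,A_{i-1},\{a\},\{b\},A_{i+2},\ldots,A_k)$ has two adjacent singleton parts and $A'$ denotes the swap of those two parts, both $\mathcal{M}_A(X_n)$ and $\mathcal{M}_{A'}(X_n)$ lie in $I_n$, and their difference telescopes as $\sum_{j<l}[x_j,x_l]\cdot w_{jl}$ for explicit common factors $w_{jl}$. By iterating such swap-relations, together with analogous identities coming from the finer structure of multi-element blocks, the plan is to show that every sufficiently long word can be rewritten modulo $I_n$ as a commutatively-straightened expression, which must then vanish by the first layer. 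The main obstacle is turning this rewriting into a terminating, confluent procedure: the number of available swap-relations grows with degree, but so does the combinatorial complexity of the ways in which a word can fail to be straight. Success requires a carefully chosen monomial order (likely keyed to the set-composition type $\tilde\nabla(w)$ of a word rather than to the word itself) together with a Bergman-style diamond-lemma argument to certify finite reduction. I expect this combinatorial bookkeeping is where the bulk of the work in \cite{AB2} lies.
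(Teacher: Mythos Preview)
The paper itself does not prove this statement here; it simply cites \cite{AB2}. So there is no in-paper argument to compare your sketch against at the level of technique.

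On the merits of your plan: the first layer is correct and useful. One checks directly that $\chi(\mathcal{M}_A(X_n))=M_{(|A_1|,\ldots,|A_k|)}(X_n)$, so $\chi(I_n)$ is precisely the commutative quasi-symmetric ideal of Example~\ref{ex:qsym}, and by \cite{AB,ABB} its quotient has dimension $C_n$ and is concentrated in degrees $\le n-1$. The surjection $R_n/I_n\twoheadrightarrow \mathbb{C}[X_n]/\chi(I_n)$ is therefore a genuine first reduction.

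The second layer, as you yourself say, is where the real content lies, and what you have written is a programme rather than an argument. Two specific points where the sketch is looser than it may appear: (i) the swap $\mathcal{M}_A-\mathcal{M}_{A'}$ for adjacent singleton \emph{blocks} $\{a\},\{b\}$ only yields a factored commutator $[x_j,x_l]$ when $a$ and $b$ are adjacent \emph{positions} in the word; otherwise the difference exchanges two non-adjacent letters and is not of commutator shape, so the relations you isolate are more special than ``all commutators up to lower terms''; (ii) even granting a large supply of commutator-type elements in $I_n$, the passage from that to ``the image of $\ker\chi$ in $R_n/I_n$ is finite-dimensional'' is exactly the hard step, since $\ker\chi$ is infinite-dimensional in every degree $\ge 2$. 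Your diamond-lemma/Gr\"obner framework is a plausible container for such an argument, but you have supplied neither the monomial order nor the reduction system, so as it stands the proposal identifies the obstacle correctly without resolving it.
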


This is shown in \cite{AB2}. In parallel with the commutative case, we believe that the non-commutative combinatorial inverse systems will prove themselves to
be very rich objects of study. It would not be surprising to discover that they all have very special Gr\"obner basis.


\end{document}